\DeclarePairedDelimiter\abs{\lvert}{\rvert}
\DeclarePairedDelimiter\norm{\lVert}{\rVert}
\DeclarePairedDelimiterX\innerp[2]{\langle}{\rangle}{#1,#2}
\providecommand\given{}
\newcommand\SetSymbol[1][]{%
\nonscript\:#1\vert
\allowbreak
\nonscript\:
\mathopen{}}
\DeclarePairedDelimiterX\Set[1]\{\}{%
\renewcommand\given{\SetSymbol[\delimsize]}
#1
}
\DeclarePairedDelimiterXPP\EE[1]{\mathbb{E}}{\lparen}{\rparen}{}{\renewcommand\given{\SetSymbol[\delimsize]}#1} 
\numberwithin{equation}{section}
\theoremstyle{plain}
\newtheorem{theorem}{Theorem}
\numberwithin{theorem}{section}
\newtheorem{lemma}[theorem]{Lemma}
\newtheorem{corollary}[theorem]{Corollary}
\theoremstyle{remark}
\newtheorem{remark}[theorem]{Remark}
\theoremstyle{definition}
\newtheorem{definition}[theorem]{Definition}
\newcommand{\dif}{\mathop{}\!\mathrm{d}} 
\newcommand{\calF}{\mathcal{F}}
\newcommand{\calX}{\mathcal{X}}
\newcommand{\R}{\mathbb{R}}
\newcommand{\N}{\mathbb{N}}
\title{Weighted L\'epingle inequality}
\author{Pavel Zorin-Kranich}
\thanks{PZ was partially supported by the Hausdorff Center for Mathematics (DFG EXC 2047)}
\address{Mathematical Institute, University of Bonn, Bonn, Germany}
\subjclass[2010]{60G17, 60G42}
\begin{document}
\begin{abstract}
We prove an estimate for weighted $p$-th moments of the pathwise $r$-variation of a martingale in terms of the $A_{p}$ characteristic of the weight.
The novelty of the proof is that we avoid real interpolation techniques.
\end{abstract}
\maketitle

\section{Introduction}
L\'epingle's inequality \cite{MR0420837} is a moment estimate for the pathwise $r$-variation of martingales.
Finite $r$-variation is a parametrization-invariant version of H\"older continuity of order $1/r$ and plays a central role in Lyons's theory of rough paths \cite{MR1654527}.

L\'epingle's inequality also found applications in ergodic theory \cite{MR1019960} and harmonic analysis \cite{MR2653686}, see \cite{arxiv:1808.04592} and \cite{MR3623404,MR3829751} and references therein, respectively, for recent developments in these directions.
Weighted inequalities in harmonic analysis go back to \cite{MR0293384}, and weighted variational inequalities have been studied since \cite{MR2501353}.
A major motivation of the weighted theory is the Rubio de Francia extrapolation theorem that allows to obtain vector-valued $L^{p}$ inequalities for all $1<p<\infty$ from scalar-valued weighted $L^{p}$ inequalities for a single $p$, see \cite[Section 3]{MR2754896} for the most basic version of that result and \cite[Theorem 8.1]{MR3667592} for a version applicable to martingales.

In this article, we prove a weighted version of L\'epingle's inequality for martingales with asymptotically sharp dependence on the $A_{p}$ characteristic of the weight.
For dyadic martingales, weighted variational inequalities were first obtained in \cite[Lemma 6.1]{MR3000985} using the real interpolation approach as in \cite{MR933985,MR1019960,MR2434308,arxiv:1808.04592}.
The argument in the dyadic case relied on the so-called open property of $A_{p}$ classes, see e.g.\ \cite[Theorem 1.2]{MR2990061}, that is in general false for martingale $A_{p}$ classes, see the example in \cite[\textsection 3]{MR544802} and \cite{MR497663}.
Therefore, we use a new stopping time argument that is also simpler than the previous proofs of L\'epingle's inequality even in the classical, unweighted, case.

\subsection{Notation}
Let $(\Omega,(\calF_{n})_{n=0}^{\infty},\mu)$ be a filtered probability space and $\calF_{\infty} := \vee_{n=0}^{\infty} \calF_{n}$.
A \emph{weight} is a positive $\calF_{\infty}$-measurable function $w : \Omega \to (0,\infty)$.
The corresponding weighted $L^{p}$ norm is given by $\norm{ X }_{L^{p}(\Omega,w)} := \bigl( \int_{\Omega} \abs{X}^{p} w \dif\mu \bigr)^{1/p}$.
For $1<p<\infty$, the \emph{martingale $A_{p}$ characteristic} of the weight $w$ is defined by
\[
Q_{p}(w) := \sup_{\tau} \norm{ \EE{w \given \calF_{\tau}} \EE{w^{-1/(p-1)} \given \calF_{\tau}}^{p-1} }_{L^{\infty}(w)},
\]
where the supremum is taken over all adapted stopping times $\tau$.
For comparison of our main result with the unweighted case, note that for $w\equiv 1$ we have $Q_{p}(w)=1$ for all $1<p<\infty$.

For $0<r<\infty$, a sequence of random variables $X = (X_{n})_{n}$, and $\omega\in\Omega$, the $r$-variation of $X$ at $\omega$ is defined by
\begin{equation}
\label{eq:Vr}
V^{r}X(\omega) := V^{r}_{n}X_{n}(\omega) :=
\sup_{u_{1} < u_{2} < \dotsb} \Bigl( \sum_{j} \abs{X_{u_{j-1}}(\omega)-X_{u_{j}}(\omega)}^{r} \Bigr)^{1/r},
\end{equation}
where the supremum is taken over arbitrary increasing sequences.

\subsection{Main result}
For an integrable $\calF_{\infty}$-measurable function $X : \Omega \to \R$, the associated martingale is defined by $X_{n} := \EE{X \given \calF_{n}}$.
We have the following weighted moment estimate for the pathwise $r$-variation of this martingale.
\begin{theorem}
\label{thm:weighted-lepingle}
For every $1<p<\infty$, there exists a constant $C_{p}<\infty$ such that, for every $r>2$, every filtered probability space $\Omega$, every weight $w$ on $\Omega$, and every integrable function $X : \Omega \to \R$, we have
\begin{equation}
\label{eq:weighted-lepingle}
\norm{ V^{r}X }_{L^{p}(\Omega,w)}
\leq C_{p} \sqrt{\frac{r}{r-2}}
Q_{p}(w)^{\max(1,1/(p-1))} \norm{ X }_{L^{p}(\Omega,w)}.
\end{equation}
\end{theorem}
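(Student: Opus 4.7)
The plan is to decompose the pathwise $r$-variation into dyadic scales using stopping times, bound each scale by weighted martingale maximal or square function estimates with sharp $A_{p}$ dependence, and sum over scales using the hypothesis $r>2$ to produce the factor $\sqrt{r/(r-2)}$.

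Concretely, for each $k\in\mathbb{Z}$ I would introduce stopping times $\tau^{(k)}_{0} := 0$ and $\tau^{(k)}_{j+1} := \inf\Set{n > \tau^{(k)}_{j} \given \abs{X_{n}-X_{\tau^{(k)}_{j}}} > 2^{k}}$, letting $N_{k}$ denote the total number of finite such stopping times. A standard L\'epingle-type jump/short-variation comparison then yields the pointwise bound
\[
V^{r}X \leq C \Bigl(\sum_{k\in\mathbb{Z}} 2^{kr} N_{k}\Bigr)^{1/r} + C \Bigl(\sum_{k\in\mathbb{Z}} S_{k}^{2}\Bigr)^{1/2},
\]
where $S_{k}$ captures the short $2$-variation of $X$ inside each block $[\tau^{(k)}_{j}, \tau^{(k)}_{j+1}]$. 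The long-jump count $N_{k}$ is controlled in weighted $L^{p}(w)$ by a Bourgain-type $\lambda$-jump inequality, itself derived from the weighted Doob maximal inequality with sharp constant $Q_{p}(w)^{1/(p-1)}$. The short-oscillation piece is absorbed by the weighted martingale square function estimate with sharp constant $Q_{p}(w)^{\max(1,1/(p-1))}$. Summing across scales by a Cauchy--Schwarz between the $\ell^{r}$-constraint on jumps and the $\ell^{2}$-constraint inherent in the square function produces the gain $\sqrt{r/(r-2)}$, which is precisely where the strict inequality $r>2$ enters.

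The main obstacle is obtaining sharp $A_{p}$ dependence in the two weighted inequalities above. In the classical dyadic or doubling setting this typically rests on the \emph{open property} of $A_{p}$, namely that $A_{p}\subset A_{p-\varepsilon}$ for some $\varepsilon>0$; as the introduction emphasizes, this property can fail for general martingale filtrations. The novelty of the paper is therefore a direct stopping-time construction yielding a sparse-like family of stopping intervals on which the weight is controlled by $Q_{p}(w)$, serving as a substitute for the open property. Executing this construction and tracking exponents carefully to recover the factor $Q_{p}(w)^{\max(1,1/(p-1))}$ in the final bound is where I expect the bulk of the technical work to lie.
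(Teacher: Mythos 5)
Your proposal follows the classical L\'epingle/Bourgain route --- decompose into long jumps at absolute thresholds $2^k$ plus short $2$-variation inside blocks, and handle the two pieces separately --- which is precisely the approach the paper sets out to \emph{replace}. The paper's proof uses a genuinely different decomposition: the stopping times are defined with a threshold $2^{-m}M_t$ \emph{relative to the running maximal oscillation} $M_t=\sup_{t''\leq t'\leq t}d(X_{t'},X_{t''})$, not an absolute threshold $2^k$. This yields a single pathwise inequality
\[
(V^r X)^r \leq 8^{2}\sum_{m\geq 2}\bigl(2^{-(m-2)}M_\infty\bigr)^{r-2}\sum_j \abs{X_{\tau^{(m)}_{j-1}}-X_{\tau^{(m)}_j}}^{2},
\]
and since $M_\infty\leq V^rX$ one cancels $(V^rX)^{r-2}$ to get a pure domination of $(V^rX)^2$ by the geometrically weighted sum $\sum_m 2^{-(m-2)(r-2)}S_{(m)}^2$ of square functions of sampled martingales. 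The factor $\sqrt{r/(r-2)}$ then falls directly out of the geometric series --- there is no long-jump count $N_k$, no $\lambda$-jump inequality, and no two-term balancing.

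Two concrete problems with your plan. First, the step where ``a Cauchy--Schwarz between the $\ell^r$-constraint on jumps and the $\ell^2$-constraint inherent in the square function produces the gain $\sqrt{r/(r-2)}$'' is the crux and is left unexplained: with absolute thresholds the sum $\sum_k 2^{kr}N_k$ carries no built-in geometric decay in $k$, so you must effectively re-express $N_k$ via a square function of the sampled martingale anyway (note $\lambda^2 N_\lambda\leq\sum_j\abs{X_{\tau_j}-X_{\tau_{j-1}}}^2$, so the ``jump inequality'' is a square function bound in disguise, not a consequence of Doob); the relative-to-$M_t$ threshold is precisely the device that makes the scale sum cleanly geometric. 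Second, your guess about the paper's novelty is off: there is no sparse-like stopping family substituting for the open property of $A_p$. The paper cites the sharp weighted martingale square function estimate (Theorem~\ref{thm:square}) as a black box, applies it to each sampled martingale $(X_{\tau^{(m)}_j})_j$, and sums via Minkowski for $p\geq 2$ (extrapolating to $p<2$); the open property is avoided because no interpolation is performed at all --- the pathwise domination eliminates the need for it.
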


\begin{remark}
By the monotone convergence theorem, Theorem~\ref{thm:weighted-lepingle} extends to c\`adl\`ag martingales.
\end{remark}

\begin{remark}
The example in \cite[Theorem 2.1]{MR1640349} shows that, for $p=2$, the constant in \eqref{eq:weighted-lepingle} must diverge at least as
\begin{equation}
\label{eq:conj-growth-rate}
\sqrt{ \log \frac{r}{r-2} } \quad\text{when}\quad r\to 2.
\end{equation}
Indeed, it is proved there that, if $(X_{n})_{n=0}^{N}$ is a martingale with i.i.d.\ increments that are Gaussian random variables with zero expectation and unit variance, then $(V^{2}X)^{2} \geq c N \log \log N$ with probability converging to $1$ as $N\to\infty$ for every $c<1/12$.
In this case, choosing $r$ such that $r-2 = 1/\log N$, by H\"older's inequality, we obtain
\[
V^{2}X
\leq
N^{1/2-1/r} V^{r}X
\leq
C V^{r}X.
\]
This would lead to a contradiction if the constant in \eqref{eq:weighted-lepingle} diverges slower than stated in \eqref{eq:conj-growth-rate}.
The growth rate of the constant in \eqref{eq:weighted-lepingle} as $r\to 2$ is important e.g.\ in Bourgain's multi-frequency lemma, as explained in \cite[\textsection 3.2]{MR3280058}.
\end{remark}

\begin{remark}
The growth rate of the constant in \eqref{eq:weighted-lepingle} as $r\to 2$ is also related to endpoint estimates, in which the $\ell^{r}$ norm in \eqref{eq:Vr} is replaced by an Orlicz space norm.
The results of \cite{MR295434} for the Brownian motion suggest that it might be possible to use a Young function that decays as  $x^{2}/\log\log x^{-1}$ when $x\to 0$.
Such an estimate would imply an estimate of the form \eqref{eq:conj-growth-rate} for the constant in \eqref{eq:weighted-lepingle}, and it would have useful consequences for rough differential equations, see \cite[Remark 5]{MR2387018}.
Our method allows to use Young functions that decay as $x^{2}/(\log x^{-1})^{1+\epsilon}$ when $x\to 0$.
\end{remark}

\begin{remark}
A Fefferman--Stein type weighted estimate that substitutes \eqref{eq:weighted-lepingle} in the case $p=1$ can be deduced from Corollary~\ref{cor:ptw-domination} and \cite[Theorem 1.1]{MR3567926}.
\end{remark}

\section{Stopping times and a pathwise $r$-variation bound}
In this section, we estimate the $r$-variation of an arbitrary adapted process pathwise by a linear combination of square functions.
We consider an adapted process $(X_{n})_{n}$ with values in an arbitrary metric space $(\calX,d)$ and extend the definition of $r$-variation \eqref{eq:Vr} by replacing the absolute value of the difference by the distance.
We have the following metric spaces $\calX$ in mind.
\begin{enumerate}
\item In Theorem~\ref{thm:weighted-lepingle}, we will use $\calX=\R$ (and $\rho=2$ below).
\item In applications to the theory of rough paths, one takes $\calX$ to be a free nilpotent group, see \cite[\textsection 9]{MR2604669}.
\item When $\calX$ is a Banach space with martingale cotype $\rho \in [2,\infty)$, Corollary~\ref{cor:ptw-domination} can be used to recover \cite[Theorem 4.2]{MR933985}.
\end{enumerate}
\begin{definition}
Let $M_{t} := \sup_{t'' \leq t' \leq t} d(X_{t'},X_{t''})$.
For each $m\in \N$, define an increasing sequence of stopping times by
\begin{equation}
\label{eq:stop-times}
\tau_{0}^{(m)}(\omega) := 0,
\quad
\tau_{j+1}^{(m)}(\omega) := \inf \Set{ t \geq \tau_{j}^{(m)}(\omega) \given d( X_{t}(\omega) , X_{\tau_{j}(\omega)}(\omega) ) \geq 2^{-m} M_{t}(\omega) }.
\end{equation}
\end{definition}

\begin{lemma}
\label{lem:comparable-jump}
Let $0 \leq t' < t < \infty$ and $m\geq 2$.
Suppose that
\begin{equation}
\label{eq:jump-magnitude}
2 < d( X_{t'}(\omega) , X_{t}(\omega))/(2^{-m}M_{t}(\omega)) \leq 4.
\end{equation}
Then there exists $j$ with $t' < \tau^{(m)}_{j}(\omega) \leq t$ and
\begin{equation}
\label{eq:comparable-jump}
d( X_{t'}(\omega) , X_{t}(\omega))
\leq
8 d( X_{\tau^{(m)}_{j-1}(\omega)}(\omega) , X_{\tau^{(m)}_{j}(\omega)}(\omega)).
\end{equation}
\end{lemma}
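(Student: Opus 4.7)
The plan is to take $j$ to be the largest index with $\tau_j^{(m)}(\omega) \le t$, abbreviate $\tau_i := \tau_i^{(m)}(\omega)$, and verify both conclusions. The defining property I use throughout is that for every $s$ with $\tau_j \le s < \tau_{j+1}$ one has $d(X_s, X_{\tau_j}) < 2^{-m} M_s$, which in the discrete-time setting of the paper is automatic from \eqref{eq:stop-times}.

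First I would verify that $\tau_j > t'$. Were $\tau_j \le t'$, both $t'$ and $t$ would lie in $[\tau_j, \tau_{j+1})$, so combining the bound above with the monotonicity of $M$ yields $d(X_{t'}, X_{\tau_j}), d(X_t, X_{\tau_j}) < 2^{-m} M_t$, whence the triangle inequality gives $d(X_{t'}, X_t) < 2 \cdot 2^{-m} M_t$, contradicting the lower bound in \eqref{eq:jump-magnitude}. Since $\tau_0 = 0 \le t'$, this also forces $j \ge 1$, so $\tau_{j-1}$ is well defined.

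Next I would show $M_{\tau_j} \ge (1 - 2^{-m}) M_t$ by a case analysis on pairs $s \le s' \le t$ appearing in the supremum defining $M_t$: if $s' \le \tau_j$ then $d(X_s, X_{s'}) \le M_{\tau_j}$; if $s \le \tau_j < s'$, triangulating through $X_{\tau_j}$ gives $d(X_s, X_{s'}) \le M_{\tau_j} + 2^{-m} M_t$; if $\tau_j < s \le s'$, the analogous triangulation gives $d(X_s, X_{s'}) \le 2 \cdot 2^{-m} M_t \le M_t/2$, which is strictly less than $M_t$ because $M_t \ge d(X_{t'}, X_t) > 0$ by \eqref{eq:jump-magnitude}. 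Taking the supremum therefore yields $M_t \le M_{\tau_j} + 2^{-m} M_t$, i.e., $M_{\tau_j} \ge (1-2^{-m}) M_t \ge \tfrac{3}{4} M_t$.

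Finally, the infimum definition of $\tau_j$ gives $d(X_{\tau_{j-1}}, X_{\tau_j}) \ge 2^{-m} M_{\tau_j}$; chaining this with the previous step and the upper bound from \eqref{eq:jump-magnitude} produces
\[
d(X_{t'}, X_t) \le 4 \cdot 2^{-m} M_t \le \tfrac{16}{3}\, d(X_{\tau_{j-1}}, X_{\tau_j}) \le 8\, d(X_{\tau_{j-1}}, X_{\tau_j}),
\]
proving \eqref{eq:comparable-jump}. The only non-routine step is the case analysis for $M_{\tau_j}$, which uses $m \ge 2$ (so that $2 \cdot 2^{-m} \le 1/2$) in an essential way; everything else is just triangle inequalities.
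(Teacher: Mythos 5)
Your proof is correct and structurally the same as the paper's: you pick the same $j$ (the last index with $\tau^{(m)}_j \le t$), prove $\tau^{(m)}_j > t'$ by the same triangle-inequality contradiction, lower-bound $M_{\tau^{(m)}_j}$ against $M_t$, and chain with $d(X_{\tau^{(m)}_{j-1}},X_{\tau^{(m)}_j}) \ge 2^{-m} M_{\tau^{(m)}_j}$. The only step you handle differently is the lower bound on $M_{\tau^{(m)}_j}$: the paper argues by contradiction that $M_{\tau^{(m)}_j} \ge M_t/2$ (if not, a new stopping time would lie in $(\tau^{(m)}_j, t]$), whereas you do a direct case analysis giving the slightly sharper $M_{\tau^{(m)}_j} \ge (1-2^{-m})M_t \ge \tfrac34 M_t$, which explains why you end up with $\tfrac{16}{3}$ instead of $8$; both arguments are fine and your constant is marginally better.
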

\begin{proof}
We fix $\omega$ and omit it from the notation.
Let $j$ be the largest integer with $\tau' := \tau^{(m)}_{j} \leq t$.
We claim that $\tau' > t'$.
Suppose for a contradiction that $\tau' < t'$ (the case $\tau'=t'$ is similar but easier).
By the hypothesis \eqref{eq:jump-magnitude} and the assumption that $t,t'$ are not stopping times, we obtain
\[
2 \cdot 2^{-m} M_{t}
<
d( X_{t'}, X_{t} )
\leq
d( X_{\tau'}, X_{t'}) + d( X_{\tau'}, X_{t})
<
2^{-m} M_{t'} + 2^{-m} M_{t}
\leq
2 \cdot 2^{-m} M_{t},
\]
a contradiction.
This shows $\tau' > t'$.

It remains to verify \eqref{eq:comparable-jump}.
Assume that $M_{\tau'} < M_{t}/2$.
Then, for some $\tau' < \tau'' \leq t$, we have $d( X_{\tau'}, X_{\tau''}) \geq M_{t}/2 \geq 2^{-m} M_{\tau''}$, contradicting maximality of $\tau'$.
It follows that
\[
d( X_{\tau^{(m)}_{j-1}} , X_{\tau^{(m)}_{j}} )
\geq
2^{-m} M_{\tau'}
\geq
2^{-m} M_{t}/2
\geq
d( X_{t'} , X_{t})/8.
\qedhere
\]
\end{proof}

\begin{lemma}
\label{lem:ptw-domination}
For every $0<\rho<r<\infty$, we have the pathwise inequality
\begin{equation}
\label{eq:ptw-domination}
V^{r}_{t}(X_{t}(\omega))^{r}
\leq
8^{\rho} \sum_{m=2}^{\infty} \bigl( 2^{-(m-2)} M_{\infty}(\omega) \bigr)^{r-\rho} \sum_{j=1}^{\infty}
d( X_{\tau^{(m)}_{j-1}(\omega)}(\omega) , X_{\tau^{(m)}_{j}(\omega)}(\omega) )^{\rho}.
\end{equation}
\end{lemma}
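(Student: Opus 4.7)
The plan is to fix $\omega$, omit it from the notation, and bound $\sum_{k} d(X_{u_{k-1}}, X_{u_{k}})^{r}$ uniformly over strictly increasing sequences $(u_{k})_{k\geq 0}$; passing to the supremum over such sequences then yields $V^{r}_{t} X_{t}$. The key idea is to classify each jump $(u_{k-1}, u_{k})$ by its dyadic scale relative to the running oscillation $M_{u_{k}}$. Since $d(X_{u_{k-1}}, X_{u_{k}}) \leq M_{u_{k}}$ by the definition of $M$, writing $d := d(X_{u_{k-1}}, X_{u_{k}}) > 0$ the half-open interval $(2 M_{u_{k}}/d,\, 4 M_{u_{k}}/d]$ contains a unique power of $2$, and this power is at least $4$. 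This produces a unique integer $m = m(k) \geq 2$ satisfying
\[
2 < \frac{d(X_{u_{k-1}}, X_{u_{k}})}{2^{-m} M_{u_{k}}} \leq 4,
\]
which is exactly the hypothesis \eqref{eq:jump-magnitude} of Lemma~\ref{lem:comparable-jump}.

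Applying Lemma~\ref{lem:comparable-jump} with $t' = u_{k-1}$, $t = u_{k}$, and $m = m(k)$ produces an index $j = j(k)$ with $u_{k-1} < \tau^{(m)}_{j} \leq u_{k}$ satisfying $d(X_{u_{k-1}}, X_{u_{k}}) \leq 8\, d(X_{\tau^{(m)}_{j-1}}, X_{\tau^{(m)}_{j}})$. Splitting $d^{r} = d^{r-\rho} \cdot d^{\rho}$, applying the upper bound from the scale-selection together with $M_{u_{k}} \leq M_{\infty}$ to the first factor, and applying the Lemma's estimate raised to the $\rho$-th power to the second, I obtain
\[
d(X_{u_{k-1}}, X_{u_{k}})^{r}
\leq
8^{\rho} \bigl(2^{-(m(k)-2)} M_{\infty}\bigr)^{r-\rho}\,
d(X_{\tau^{(m(k))}_{j(k)-1}}, X_{\tau^{(m(k))}_{j(k)}})^{\rho}.
\]

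The final step is to sum this over $k$ and regroup by $m$. The essential observation is that, for each fixed $m$, the map $k \mapsto j(k)$ is strictly increasing on $\{k : m(k) = m\}$: if $k_{1} < k_{2}$ both have $m(k_{i}) = m$, then $\tau^{(m)}_{j(k_{1})} \leq u_{k_{1}} \leq u_{k_{2}-1} < \tau^{(m)}_{j(k_{2})}$, whence $j(k_{1}) < j(k_{2})$. Replacing the inner sum over $\{k : m(k) = m\}$ by the full sum over $j \geq 1$ and then summing over $m \geq 2$ yields the right-hand side of \eqref{eq:ptw-domination}. The main conceptual step is the dyadic scale selection that charges each jump to a single scale; the rest is bookkeeping. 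The only minor technical point is that the proof of Lemma~\ref{lem:comparable-jump} tacitly uses that $t'$ and $t$ are not themselves in the stopping-time sequence, so a short perturbation or limit argument may be needed when some $u_{k}$ coincides with a $\tau^{(m)}_{j}$.
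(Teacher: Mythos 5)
Your proof is correct and follows the paper's argument exactly: the same dyadic scale selection $m(k)$, the same application of Lemma~\ref{lem:comparable-jump}, the same factor split $d^{r}=d^{r-\rho}d^{\rho}$, and the same injectivity of the assignment $k\mapsto(m(k),j(k))$, which you usefully sharpen to strict monotonicity of $j(k)$ for fixed $m$. Regarding your closing concern, no perturbation argument is actually needed in Lemma~\ref{lem:comparable-jump}: its proof only uses that $t$ (and, in the contradiction case, $t'$) lie strictly before $\tau^{(m)}_{j+1}$, which is automatic from the maximality of $j$, so $u_{k-1}$ or $u_{k}$ may coincide with earlier stopping times without causing trouble.
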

\begin{proof}
We fix $\omega$ and omit it from the notation.
Let $(u_{l})$ be any increasing sequence.
For each $l$ with $d( X_{u_{l}} , X_{u_{l+1}})\neq 0$, let $m=m(l) \geq 2$ be such that
\[
2 < d(  X_{u_{l}} , X_{u_{l+1}} )/(2^{-m}M_{u_{l+1}}) \leq 4.
\]
Such $m$ exists because the distance is bounded by $M_{u_{l+1}}$.

Let $j$ be given by Lemma~\ref{lem:comparable-jump} with $t'=u_{l}$ and $t=u_{l+1}$.
Then
\[
d(  X_{u_{l}} , X_{u_{l+1}} )^{r}
\leq
8^{\rho} d( X_{\tau^{(m)}_{j-1}} , X_{\tau^{(m)}_{j}})^{\rho}
\cdot (4 \cdot 2^{-m} M_{u_{l+1}})^{r-\rho}.
\]
Since each pair $(m,j)$ occurs for at most one $l$, this implies
\[
\sum_{l} d(  X_{u_{l}} , X_{u_{l+1}} )^{r}
\leq
8^{\rho} \sum_{m,j} d( X_{\tau^{(m)}_{j-1}} , X_{\tau^{(m)}_{j}})^{\rho}
\cdot (2^{-(m-2)} M_{\infty})^{r-\rho}.
\]
Taking the supremum over all increasing sequences $(u_{l})$, we obtain \eqref{eq:ptw-domination}.
\end{proof}

\begin{corollary}
\label{cor:ptw-domination}
For every $0<\rho<r<\infty$, we have the pathwise inequality
\begin{equation}
\label{eq:ptw-domination:only-square}
\boxed{
V^{r}_{t}(X_{t}(\omega))^{\rho}
\leq
8^{\rho} \sum_{m=2}^{\infty} 2^{-(m-2)(r-\rho)} \sum_{j=1}^{\infty}
d( X_{\tau^{(m)}_{j-1}(\omega)}(\omega) , X_{\tau^{(m)}_{j}(\omega)}(\omega) )^{\rho}.
}
\end{equation}
\end{corollary}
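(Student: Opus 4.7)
The plan is a one-line bootstrap of Lemma~\ref{lem:ptw-domination} using the elementary pointwise bound
\[
M_{\infty}(\omega) \leq V^{r}_{t}X_{t}(\omega).
\]
This bound is immediate from the definition \eqref{eq:Vr}: for any $t'' \leq t'$, the two-term increasing sequence $(t'',t')$ contributes a single term $d(X_{t'},X_{t''})$, so $d(X_{t'},X_{t''}) \leq V^{r}X$, and taking the supremum yields the claim.

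Assuming for the moment that $0 < V^{r}X(\omega) < \infty$, I would substitute $M_{\infty}(\omega) \leq V^{r}X(\omega)$ into the factor $(2^{-(m-2)}M_{\infty})^{r-\rho}$ appearing in \eqref{eq:ptw-domination}, factor the resulting $(V^{r}X)^{r-\rho}$ out of the sum, and divide both sides of the inequality by this positive finite quantity. The power $r$ on the left drops to $r - (r-\rho) = \rho$, which is exactly the exponent in \eqref{eq:ptw-domination:only-square}, and the resulting sum is precisely the right-hand side of that inequality.

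The case $V^{r}X(\omega) = 0$ is trivial since then every jump $d(X_{\tau^{(m)}_{j-1}},X_{\tau^{(m)}_{j}})$ vanishes as well, so both sides are zero. The case $V^{r}X(\omega) = \infty$ is the only mild obstacle: one cannot divide directly. I would handle it by truncation. Replace the supremum in \eqref{eq:Vr} by a supremum over increasing sequences drawn from a finite set $F$ of times, define $M_{F}$ analogously, and observe that the proof of Lemma~\ref{lem:ptw-domination} applies verbatim in this restricted setting, giving the $F$-version of \eqref{eq:ptw-domination} in which all quantities are finite. The bootstrap above then yields the $F$-version of \eqref{eq:ptw-domination:only-square}. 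Both sides are monotone nondecreasing as $F$ grows (the right-hand side because $M_{F}$ increases and each sum over $j$ of $\rho$-th powers of jumps can only increase as more stopping times become available), so taking the monotone supremum over finite sets $F$ delivers the full inequality \eqref{eq:ptw-domination:only-square}. In particular, if $V^{r}X = \infty$ then the right-hand side must also be infinite, and the inequality holds trivially.

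I expect no serious difficulty; the proof is essentially a rewriting of Lemma~\ref{lem:ptw-domination}, and the only care needed is in the truncation step to legitimately cancel the factor $(V^{r}X)^{r-\rho}$.
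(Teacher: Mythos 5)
Your proposal is correct in its essentials and follows the same route as the paper: bound $M_{\infty}\leq V^{r}X$ directly from the definition, plug into Lemma~\ref{lem:ptw-domination}, and cancel $(V^{r}X)^{r-\rho}$. The only step I'd flag is the truncation you use to handle $V^{r}X(\omega)=\infty$. You truncate to an arbitrary finite set $F$ of times and assert that the right-hand side of the $F$-version of \eqref{eq:ptw-domination:only-square} is nondecreasing in $F$ ``because more stopping times become available.'' That justification is not quite right: enlarging $F$ can only increase $M_{F,t}$, which raises the threshold $2^{-m}M_{t}$ in \eqref{eq:stop-times} and can therefore \emph{suppress} stopping times, so it is not immediate that the $F$-restricted square sum is monotone in $F$ for general $F$. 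The clean fix, which is what the paper does via monotone convergence, is to take $F$ to be an initial segment $\{0,1,\dots,N\}$ (equivalently, assume $X_{n}$ stabilizes for large $n$). Then $M_{t}$ for $t\leq N$ is unchanged by the truncation, the restricted stopping times coincide with the unrestricted ones up to time $N$, and the inner sums over $j$ are genuinely nested; at the same time $M_{\infty}\leq V^{r}X<\infty$ holds for the truncated process, so the cancellation is legitimate, and letting $N\to\infty$ finishes the proof. With that one adjustment your argument matches the paper's.
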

\begin{proof}
By the monotone convergence theorem, we may assume that $X_{n}$ becomes independent of $n$ for sufficiently large $n$.
In this case,
\[
M_{\infty}(\omega) \leq V^{r}_{t}(X_{t}(\omega)) < \infty.
\]
Substituting this inequality in \eqref{eq:ptw-domination} and canceling $V^{r}_{t}(X_{t}(\omega))^{r-2}$ on both sides, the claim follows.
\end{proof}

\section{Proof of the weighted L\'epingle inequality}
Estimates in weighted spaces $L^{p}(\Omega,w)$ for differentially subordinate martingales with sharp dependence on the characteristic $Q_{p}(w)$ were obtained in \cite{MR3406523} in the discrete case (a simpler alternative proof is in \cite{MR3625108}) and \cite{MR3916937} in the continuous case (a simpler alternative proof is in \cite{arxiv:1607.06319}).
By Khintchine's inequality, these results imply the following weighted estimate for the martingale square function.

\begin{theorem}[{cf.~\cite{arxiv:1607.06319}}]
\label{thm:square}
Let $(X_{j})_{j=0}^{\infty}$ be a martingale on a probability space $\Omega$.
Then, for every $1<p<\infty$, we have
\begin{equation}
\label{eq:square}
\norm[\Big]{ \bigl( \sum_{j=1}^{\infty} \abs{X_{j}-X_{j-1}}^{2} \bigr)^{1/2} }_{L^{p}(\Omega,w)}
\leq C_{p}
Q_{p}(w)^{\max(1,1/(p-1))} \norm{ X }_{L^{p}(\Omega,w)},
\end{equation}
where the constant $C_{p}<\infty$ depends only on $p$, but not on the martingale $X$ or the weight $w$.
\end{theorem}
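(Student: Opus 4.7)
The plan is to deduce the square function estimate from the weighted differential subordination bound cited just before the theorem, via a standard randomization with Rademacher variables followed by Khintchine's inequality—essentially unpacking the phrase ``By Khintchine's inequality'' in the excerpt.

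Concretely, I would enlarge the probability space to $\Omega \times \Omega'$, where $(\Omega', \P')$ carries i.i.d.\ Rademacher variables $(\epsilon_{j})_{j\geq 1}$, and work with the filtration obtained by adjoining $\epsilon_{j}$ at step $j$. Extend the weight by $\tilde w(\omega,\omega') := w(\omega)$. Because both $w$ and $w^{-1/(p-1)}$ are $\calF_{\infty}$-measurable and hence independent of $\omega'$, the conditional expectations defining $Q_{p}(\tilde w)$ factor through those on the original filtration, so that $Q_{p}(\tilde w) = Q_{p}(w)$. Now form the randomized martingale
\[
Y_{n}(\omega,\omega') := \sum_{j=1}^{n} \epsilon_{j}(\omega') \bigl( X_{j}(\omega) - X_{j-1}(\omega) \bigr).
\]
Its increments satisfy $\abs{Y_{j}-Y_{j-1}} = \abs{X_{j}-X_{j-1}}$ pathwise, so $Y$ is differentially subordinate to $X$ on the product space. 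The cited weighted subordination estimate therefore yields
\[
\norm{Y_{n}}_{L^{p}(\Omega\times\Omega',\tilde w)} \leq C_{p}\, Q_{p}(w)^{\max(1,1/(p-1))}\, \norm{X_{n}}_{L^{p}(\Omega,w)}.
\]

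To extract the square function, I apply Khintchine's inequality in the $\omega'$ variable with $\omega$ fixed,
\[
\Bigl( \sum_{j=1}^{n} \abs{X_{j}(\omega) - X_{j-1}(\omega)}^{2} \Bigr)^{p/2} \leq c_{p}\, \mathbb{E}_{\omega'} \abs{Y_{n}(\omega,\omega')}^{p},
\]
then integrate against $w\dif\mu$, invoke Fubini, take $p$-th roots, and let $n\to\infty$, using monotone convergence on the square function side and $\norm{X_{n}}_{L^{p}(\Omega,w)} \leq \norm{X}_{L^{p}(\Omega,w)}$ on the right; this yields \eqref{eq:square}.

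The only genuine subtlety is the identity $Q_{p}(\tilde w) = Q_{p}(w)$, since stopping times adapted to the enlarged filtration may depend on the Rademacher sample. Conditioning on the relevant Rademacher block and using its independence from $\calF_{\infty}$ reduces the supremum to one over stopping times adapted to $(\calF_{n})$, so no inflation of the characteristic occurs; everything else is routine.
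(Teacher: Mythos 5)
Your proposal is correct and is exactly the argument the paper has in mind: the paper does not spell out a proof of Theorem~\ref{thm:square}, but merely says that it follows from the cited weighted differential-subordination bounds ``by Khintchine's inequality,'' and your Rademacher randomization on a product space, combined with the observation that the product weight $\tilde w(\omega,\omega')=w(\omega)$ has the same $A_p$ characteristic (since sections of $(\tilde\calF_n)$-stopping times at fixed $\omega'$ are $(\calF_n)$-stopping times, and $w$, $w^{-1/(p-1)}$ are $\calF_\infty$-measurable hence independent of the Rademacher variables), is the standard way to fill in that sketch. The only point worth noting explicitly is that $X$ itself remains a martingale with respect to the enlarged filtration, so that the differential-subordination estimate applies to the pair $(X,Y)$ there; your claim $Q_p(\tilde w)=Q_p(w)$ and the subsequent Fubini/Khintchine steps are all in order.
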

An alternative proof that deals directly with the square function \eqref{eq:square} appears in \cite{MR3748572}, but it is carried out only for continuous time martingales with continuous paths.

\begin{proof}[Proof of Theorem~\ref{thm:weighted-lepingle}]
By extrapolation, see \cite[Theorem 8.1]{MR3667592}, it suffices to consider $p=2$.
We will in fact give a direct proof for $2 \leq p < \infty$.
A similar argument also works for $1 < p < 2$, but gives a poorer dependence on $r$ than claimed in \eqref{eq:weighted-lepingle}.

Let $\tau^{(m)}_{j}$ be the stopping times constructed in \eqref{eq:stop-times}, and let
\[
S_{(m)}(\omega) := \Bigl( \sum_{j=1}^{\infty} \abs{X_{\tau^{(m)}_{j-1}(\omega)}(\omega) - X_{\tau^{(m)}_{j}(\omega)}(\omega)}^{2} \Bigr)^{1/2}
\]
denote the square function of the sampled martingale $(X_{\tau^{(m)}_{j}})_{j}$.
Then Corollary~\ref{cor:ptw-domination} with $\calX=\R$ and $\rho=2$ gives
\[
V^{r}X
\leq
8 \Bigl( \sum_{m=2}^{\infty} 2^{-(m-2)(r-2)} S_{(m)}^{2} \Bigr)^{1/2}.
\]
Since $2 \leq p < \infty$, by Minkowski's inequality, this implies
\[
\norm{V^{r}X}_{L^{p}(\Omega,w)}
\leq
8 \Bigl( \sum_{m=2}^{\infty} 2^{-(m-2)(r-2)} \norm{S_{(m)}}_{L^{p}(\Omega,w)}^{2} \Bigr)^{1/2}.
\]
Inserting the square function estimates \eqref{eq:square} for the sampled martingales $(X_{\tau^{(m)}_{j}})_{j}$ on the right-hand side above, we obtain
\begin{align*}
\norm{ V^{r}X }_{L^{p}(\Omega,w)}
&\leq
8C_{p} Q_{p}(w) \norm{ X }_{L^{p}(\Omega,w)} \Bigl( \sum_{m=2}^{\infty} 2^{-(m-2)(r-2)} \Bigr)^{1/2}
\\ &=
8 C_{p} \bigl( 1- 2^{-(r-2)} \bigr)^{-1/2} Q_{p}(w) \norm{ X }_{L^{p}(\Omega,w)}.
\end{align*}
This implies \eqref{eq:weighted-lepingle}.
\end{proof}

\begin{remark}
One can also directly apply Theorem~\ref{thm:square} for $1<p<2$, without passing through the extrapolation theorem.
But this seems to lead to a faster growth rate of the constant in \eqref{eq:weighted-lepingle} as $r\to 2$.
\end{remark}

\begin{remark}
The unweighted L\'epingle inequality (Theorem~\ref{thm:weighted-lepingle} with $w \equiv 1$) follows from Corollary~\ref{cor:ptw-domination} and the usual Burkholder--Davis--Gundy (BDG) inequality.
\end{remark}

\begin{remark}
Corollary~\ref{cor:ptw-domination} can be used to recover the $p$-variation rough path BDG inequality \cite[Theorem 4.7]{MR3909973}.
For convex moderate functions $F(x)=x^{p}$ with $1<p<\infty$, the required estimate for the square function appearing in \eqref{eq:ptw-domination:only-square} can be deduced from the usual BDG inequality and \cite[Proposition 3.1]{arxiv:1812.09763}.
The latter result can be extended to arbitrary convex moderate functions $F$ using the Davis martingale decomposition.
\end{remark}

\begin{remark}
Let $\rho \in [2,\infty)$, and let $\calX$ be a Banach space with martingale cotype $\rho$.
Using Corollary~\ref{cor:ptw-domination} and the $\rho$-function bounds for $\calX$-valued martingales in \cite[Theorem 10.59]{MR3617459}, we see that, for every $1<p<\infty$, $r>\rho$, every filtered probability space $\Omega$, and every integrable function $X : \Omega \to \calX$, we have
\begin{equation}
\label{eq:vv-lepingle}
\norm{ V^{r}X }_{L^{p}(\Omega)}
\leq C_{\calX,p} \frac{r}{r-\rho}
\norm{ X }_{L^{p}(\Omega)}.
\end{equation}
In fact, it is possible to obtain a slightly better dependence on $r$, which we omit for simplicity.
There is also an endpoint version of \eqref{eq:vv-lepingle} at $p=1$, in which $X$ is replaced by the martingale maximal function on the right-hand side.

The vector-valued estimate \eqref{eq:vv-lepingle} was first proved in \cite[Theorem 4.2]{MR933985}, with an unspecified dependence on $r$.
The dependence on $r$ stated in \eqref{eq:vv-lepingle} can also be obtained using Theorem 1.3 and Lemma 2.17 in \cite{arxiv:1808.04592}, as well as real interpolation, but this method does not work at the endpoint $p=1$.
\end{remark}

\paragraph*{Acknowledgment.}
This work was partially supported by the Hausdorff Center for Mathematics (DFG EXC 2047).

\printbibliography
\end{document}